\newtheorem{thm}{Theorem}[section]
\newtheorem{cor}[thm]{Corollary}
\theoremstyle{remark}
\newtheorem{rem}[thm]{Remark}
\theoremstyle{definition}
\numberwithin{equation}{section}
\DeclareMathOperator{\Ext}{Ext}
\DeclareMathOperator{\CH}{CH}
\begin{document}

\vfuzz0.5pc
\hfuzz0.5pc 

\newcommand{\claimref}[1]{Claim \ref{#1}}
\newcommand{\thmref}[1]{Theorem \ref{#1}}
\newcommand{\propref}[1]{Proposition \ref{#1}}
\newcommand{\lemref}[1]{Lemma \ref{#1}}
\newcommand{\coref}[1]{Corollary \ref{#1}}
\newcommand{\remref}[1]{Remark \ref{#1}}
\newcommand{\conjref}[1]{Conjecture \ref{#1}}
\newcommand{\questionref}[1]{Question \ref{#1}}
\newcommand{\defnref}[1]{Definition \ref{#1}}
\newcommand{\secref}[1]{Sec. \ref{#1}}
\newcommand{\ssecref}[1]{\ref{#1}}
\newcommand{\sssecref}[1]{\ref{#1}}

\newcommand{\red}{{\mathrm{red}}}
\newcommand{\tors}{{\mathrm{tors}}}
\newcommand{\eq}{\Leftrightarrow}

\newcommand{\mapright}[1]{\smash{\mathop{\longrightarrow}\limits^{#1}}}
\newcommand{\mapleft}[1]{\smash{\mathop{\longleftarrow}\limits^{#1}}}
\newcommand{\mapdown}[1]{\Big\downarrow\rlap{$\vcenter{\hbox{$\scriptstyle#1$}}$}}
\newcommand{\smapdown}[1]{\downarrow\rlap{$\vcenter{\hbox{$\scriptstyle#1$}}$}}

\newcommand{\A}{{\mathbb A}}
\newcommand{\II}{{\mathcal I}}
\newcommand{\J}{{\mathcal J}}
\newcommand{\CO}{{\mathcal O}}
\newcommand{\CC}{{\mathcal C}}
\newcommand{\C} {{\mathbb C}}
\newcommand{\BC}{{\mathbb C}}
\newcommand{\BQ}{{\mathbb Q}}
\newcommand{\m}{{\mathcal M}}
\newcommand{\h}{{\mathcal H}}
\newcommand{\ZZ}{{\mathcal Z}}
\newcommand{\Z} {{\mathbb Z}}
\newcommand{\BZ}{{\mathbb Z}}
\newcommand{\W}{{\mathcal W}}
\newcommand{\Y}{{\mathcal Y}}
\newcommand{\T}{{\mathcal T}}
\newcommand{\BP}{{\mathbb P}}
\newcommand{\CP}{{\mathcal P}}
\newcommand{\G}{{\mathbb G}}
\newcommand{\BR}{{\mathbb R}}
\newcommand{\DD}{{\mathcal D}}
\newcommand{\LL}{{\mathcal L}}
\newcommand{\f}{{\mathcal F}}
\newcommand{\EE}{{\mathcal E}}
\newcommand{\BN}{{\mathbb N}}
\newcommand{\N}{{\mathcal N}}
\newcommand{\K}{{\mathcal K}}
\newcommand{\R} {{\mathbb R}}
\newcommand{\PP} {{\mathbb P}}
\newcommand{\BF}{{\mathbb F}}
\newcommand{\closure}[1]{\overline{#1}}
\newcommand{\EQ}{\Leftrightarrow}
\newcommand{\imply}{\Rightarrow}
\newcommand{\isom}{\cong}
\newcommand{\embed}{\hookrightarrow}
\newcommand{\tensor}{\mathop{\otimes}}
\newcommand{\wt}[1]{{\widetilde{#1}}}
\newcommand{\ol}{\overline}
\newcommand{\ul}{\underline}
\newcommand{\QQ}{{\mathcal Q}}

\newcommand{\bs}{{\backslash}}
\newcommand{\CS}{{\mathcal S}}
\newcommand{\Q}{{\mathbb Q}}
\newcommand{\F}{{\mathcal F}}

\author{Xi Chen \and James D. Lewis}

\institute{Xi Chen
\at 632 Central Academic Building, University of Alberta, Edmonton, Alberta T6G 2G1, CANADA
\email{xichen@math.ualberta.ca}
\and
James D. Lewis
\at 632 Central Academic Building, University of Alberta, Edmonton, Alberta T6G 2G1, CANADA
\email{lewisjd@ualberta.ca}}

\abstract{We prove some general density statements about the subgroup of invertible
points on intermediate jacobians; namely those points in the Abel-Jacobi image
of nullhomologous algebraic cycles on projective algebraic manifolds.
\keywords{Abel-Jacobi map, intermediate jacobian, normal function, Chow group;
1991 {\em Mathematics Subject Classification}: Primary 14C25, Secondary 14C30, 14C35.}
}

\thanks{Both authors partially supported by a grant from the 
Natural Sciences and Engineering Research Council of Canada.}

\date{July 24, 2012}

\title*{Dynamics of Special Points on Intermediate Jacobians}

\maketitle

\section{Introduction}\label{SEC003}

Let $X/\C$ be a projective algebraic manifold, $\CH^r(X)$ the Chow group of codimension $r$
algebraic cycles on $X$ (with respect to the equivalence relation of rational equivalence),
and $\CH^r_{\hom}(X)$ the subgroup of cycles that are nullhomologous  under the cycle class map
to singular cohomology with $\Z$-coefficients. Largely in relation to the celebrated
Hodge conjecture, as well as with regard to equivalence relations on algebraic cycles,
the Griffiths Abel-Jacobi map
\[
\Phi_r : \CH^r_{\hom}(X) \to J^r(X)\ {\buildrel {\rm Carlson}\over  \simeq}\ \Ext_{\rm MHS}^1\big(\Z(0),H^{2r-1}(X,\Z(r))\big),
\]
has been a focus of attention for the past 60 years. The role of the Abel-Jacobi map in
connection to the celebrated Hodge conjecture began with the work of Lefschetz in
his proof of his famous ``Lefschetz $(1,1)$ theorem''; and which inspired Griffiths
to develop his program of  updating Lefschetz's ideas as a general line of attack on
the Hodge conjecture (see \cite{Z1}, as well as \cite[Lec. 6, 12, 14]{L}).
To this day, a {\em precise} statement 
about what the image of $\Phi_r$ is in general seems rather elusive.
What we do know is that there are examples
where the image of $\Phi_r$ can be a countable set (even infinite dimensional over $\Q$)
\cite{Gr}, \cite{Cl},
or completely torsion \cite{G}. One can ask whether the image of $\Phi_r$ is always dense in $J^r(X)$,
but even that is unlikely to be true in light of some results in the
literature inspired by some of the conjectures in \cite{G-H}.
In this paper, we seek to come up with a general statement about the image of $\Phi_r$, 
which however is
modest,  is indeed is better than no statement at all. There are two key ideas exploited
in this paper, viz., the business of Lefschetz pencils and associated normal functions, and 
the  classical Kronecker's theorem  (see \cite{H-W} (Chapter XXIII)), which we
state in the following form:

{\begin{thm}\label{THM200}
Let $A = \BR^n /\BZ^n$ be a compact real torus of dimension $n$. 
For a point $p = (x_1, x_2, ..., x_n)\in A$, $\BZ p = \{ kp: k\in \BZ\}$ is dense in $A$ if and only
if $1, x_1, x_2, ..., x_n$ are linearly independent over $\BQ$.
In particular, the set
\begin{equation}\label{E034}
\big\{p\in A:  \BZ p \text{ is not dense in } A\big\}
\end{equation}
is of the first Baire category.
\end{thm}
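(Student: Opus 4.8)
The plan is to route everything through the characters of $A$. For $a=(a_1,\dots,a_n)\in\BZ^n$ let $\chi_a\colon A\to\BR/\BZ$ be the continuous homomorphism $\chi_a(y_1,\dots,y_n)=a_1y_1+\cdots+a_ny_n\bmod\BZ$; every character of $A$ arises this way, and $\chi_a$ is nontrivial exactly when $a\neq0$. For $p=(x_1,\dots,x_n)$ one has $\chi_a(p)=0$ in $\BR/\BZ$ iff $a_1x_1+\cdots+a_nx_n\in\BZ$. The theorem then comes down to the structural fact that, for a closed subgroup $H\leq A$, one has $H=A$ if and only if no nontrivial character of $A$ vanishes identically on $H$ --- Pontryagin duality for the torus $A$; concretely, if $H\neq A$ then the nontrivial compact abelian Lie group $A/H$ carries a nontrivial character, which pulls back to one vanishing on $H$.

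Granting this, the equivalence is formal. If $1,x_1,\dots,x_n$ are $\BQ$-linearly dependent, clear denominators to get $a_0,\dots,a_n\in\BZ$ with $(a_1,\dots,a_n)\neq0$ and $a_0+a_1x_1+\cdots+a_nx_n=0$; then $\chi_a(p)=0$, so $\BZ p\subseteq\ker\chi_a$, hence $\closure{\BZ p}\subseteq\ker\chi_a\subsetneq A$ and $\BZ p$ is not dense. Conversely, if $\BZ p$ is not dense then $H:=\closure{\BZ p}\neq A$, so some nontrivial $\chi_a$ ($a\neq0$) vanishes on $H$; in particular $a_1x_1+\cdots+a_nx_n=m$ for some $m\in\BZ$, and since $a\neq0$ the relation $-m\cdot1+a_1x_1+\cdots+a_nx_n=0$ shows $1,x_1,\dots,x_n$ are $\BQ$-linearly dependent. (If one would rather not cite Pontryagin duality, the implication ``independence $\Rightarrow$ density'' also follows from Weyl's equidistribution criterion: for $a\neq0$ the sums $\tfrac1N\sum_{k=0}^{N-1}e^{2\pi ik\chi_a(p)}$ are $O(1/N)$ since $e^{2\pi i\chi_a(p)}\neq1$, so the orbit $(kp)_{k\geq0}$ equidistributes in $A$ and is in particular dense; alternatively one reproduces the elementary box-principle argument of \cite{H-W}, Chapter XXIII.)

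For the last assertion, the equivalence identifies the set in \eqref{E034} with
\[
B\;=\;\bigcup_{0\neq a\in\BZ^n}\ker\chi_a,
\]
a countable union. Each $\ker\chi_a$ with $a\neq0$ is a proper closed subgroup of the connected group $A$, hence closed with empty interior (an open subgroup would be clopen and, by connectedness of $A$, equal to $A$), i.e. nowhere dense. Thus $B$ is a countable union of nowhere dense sets, so it is of the first Baire category, as claimed. (This last statement can also be seen directly, without the equivalence: fixing a countable base $\{U_m\}$ of $A$ and letting $\rho_k\colon A\to A$ be multiplication by $k$, the set $\{p:\BZ p\text{ dense}\}=\bigcap_m\bigcup_k\rho_k^{-1}(U_m)$ is a countable intersection of open dense sets, density of $\bigcup_k\rho_k^{-1}(U_m)$ coming from the fact that the cosets of the $k$-torsion subgroup $A[k]$ become arbitrarily fine as $k\to\infty$.)

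The only genuinely nontrivial ingredient is the structural fact of the first paragraph, that a proper closed subgroup of $A$ is detected by a nontrivial character; I expect settling exactly how to present that --- invoking Pontryagin duality, or Peter--Weyl for $A/H$, or substituting the self-contained Weyl-sum or box-principle argument --- to be the main point, everything else being bookkeeping.
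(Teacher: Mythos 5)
Your proof is correct. Note that the paper does not actually prove this statement: it quotes Kronecker's theorem from \cite{H-W} (Chapter XXIII) and records the Baire-category consequence as an immediate ``in particular,'' so there is no in-paper argument to match yours against. What you supply is the standard structural proof via Pontryagin duality: identifying the characters of $\BR^n/\BZ^n$ with $\BZ^n$, observing that a proper closed subgroup is killed by some nontrivial character, and translating ``$\chi_a(p)=0$'' into a $\BQ$-linear relation among $1,x_1,\dots,x_n$; this is cleaner and shorter than the elementary induction/box-principle proofs in Hardy--Wright, at the cost of invoking duality (or Peter--Weyl for $A/H$), and your Weyl-sum fallback removes even that dependence for the direction that matters. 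Your treatment of the category statement is also correct and is genuinely a proof of something the paper only asserts: the exceptional set is exactly $\bigcup_{0\neq a\in\BZ^n}\ker\chi_a$, a countable union of proper closed subgroups of a connected group, each of which has empty interior and is therefore nowhere dense. The only point worth making explicit is the one-line observation, implicit in your ``clear denominators'' step, that a nontrivial $\BQ$-relation among $1,x_1,\dots,x_n$ must have some $a_i\neq 0$ with $i\geq 1$ (otherwise the relation reads $a_0\cdot 1=0$); with that said, everything closes up.
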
}

The main results are stated in Theorem \ref{T303} and Corollaries \ref{C33} and \ref{C666} below.

\medskip
We are grateful to our colleagues Matt Kerr, Phillip Griffiths and Chuck Doran for enlightening discussions, as well as the referee for suggesting improvements in presentation. 

\section{Some preliminaries}

All integral cohomology is intended modulo torsion.
Let $X/\C$ be a projective algebraic manifold of dimension $2m$
and $\{X_t\}_{t\in \PP^1}$ a Lefschetz pencil of hyperplane sections
of $X$ arising from a given polarization on $X$.  Let $D := \bigcap_{t\in \PP^1}X_t$ be the (smooth) base locus
and $\ol{X} = B_D(X)$, the blow-up. One has a diagram:

\begin{equation}\label{D27}
\begin{matrix} \ol{X}_U&\hookrightarrow&\ol{X}\\
&\\
\rho_U\biggl\downarrow\quad&&\quad\biggr\downarrow\rho\\
&\\
U&{\buildrel j\over \hookrightarrow}&\PP^1,
\end{matrix}
\end{equation}
where $\Sigma := \PP^1\bs U = \{t_1,...,t_M\}$ is the singular set, viz.,
where the fibers are singular Lefschetz hyperplane sections. One has a short exact sequence
of sheaves
\begin{equation}\label{999}
0 \to j_{\ast}R^{2m-1}\rho_{U,\ast}\Z \to \ol{\F}^{m,\ast}\to \ol{\J}\to 0,
\end{equation}
where 
\[
\ol{\F}^{m,\ast} = \CO_{\PP^1}\biggl(\coprod_{t\in \PP^1}
\frac{H^{2m-1}(X_t,\C)}{F^mH^{2m-1}(X_t,\C)}\biggr)\quad {\rm (canonical\ extension)},
\]
and where the cokernel sheaf $\ol{\J}$ is the sheaf of germs of normal functions. The canonical  (sometimes called the privileged) extension $\ol{\F}^{m,*}$ of the vector bundle
\[
\F^{m,*} := \CO_{U}\biggl(\coprod_{t\in U}
\frac{H^{2m-1}(X_t,\C)}{F^mH^{2m-1}(X_t,\C)}\biggr),
\]
is introduced in \cite{Z1} (as well as in the references cited there). It plays a role
in the required limiting behaviour of the group $H^0(\PP^1,\ol{\J})$ of normal functions ``at the boundary'', viz.,  at $\Sigma$. Roughly speaking then, a normal function $\nu\in 
H^0(\PP^1,\ol{\J})$ is a holomorphic cross-section,
\[
\nu : \PP^1 \to \coprod_{t\in \PP^1}J^m(X_t),
\]
where for $t\in \Sigma$, $J^m(X_t)$ are certain ``generalized'' intermediate  jacobians, and
where $\nu$ is locally liftable to a section of $\ol{\F}^{m,*}$.\footnote{There is also a horizontality condition attached to the definition of normal functions of families of projective algebraic
manifolds, which automatically holds in the Lefschetz pencil situation (see \cite[Thm. 4.57]{Z1}).} The results in \cite[Cor. 4.52]{Z1}  show that (\ref{999}) induces a short exact sequence:
\begin{equation}\label{D28}
0 \to J^m(X) \to H^0(\PP^1,\ol{\J}) \xrightarrow{\delta} 
H^1(\PP^1,j_*R^{2m-1}\rho_{U,\ast}\Z)^{(m,m)}\to 0,
\end{equation}
where it is also shown that with respect to the aforementioned polarization of $X$ defining primitive cohomology, 
\begin{equation}\label{ER}
H^1(\PP^1,j_*R^{2m-1}\rho_{U,\ast}\C) \simeq {\rm Prim}^{2m}(X,\C) \bigoplus H_v^{2m-2}(D,\C),
\end{equation}
where $H^{2m-2}_v(D,\C)  = \ker \big(H^{2m-2}(D,\C)\to H^{2m+2}(X,\C)\big)$, (induced
by the inclusion $D\hookrightarrow X$, and where
$H^1(\PP^1,j_*R^{2m-1}\rho_{U,\ast}\Z)^{(m,m)}$ are the integral
classes of Hodge type $(m,m)$ in $H^1(\PP^1,j_*R^{2m-1}\rho_{U,\ast}\Z)$,
and the fixed part $J^m(X)$ is the
Griffiths intermediate jacobian of $X$. It should be pointed out that 
there is an intrinsically defined Hodge structure on the space
$H^1(\PP^1,j_*R^{2m-1}\rho_{U,\ast}\C)$ and that (\ref{ER}) is an isomorphism
of Hodge structures  \cite{Z2}.
For $t\in U$, the Lefschetz theory  guarantees an orthogonal decomposition
\[
H^{2m-1}(X_t,\C) = H^{2m-1}(X,\C) \oplus H^{2m-1}_v(X_t,\C),
\]
where by the weak Lefschetz theorem, $H^{2m-1}(X,\C)$ is identified with its image
$H^{2m-1}(X,\C)\hookrightarrow H^{2m-1}(X_t,\C)$
and integrally speaking, $H^{2m-1}_v(X_t,\Z)$ is the space generated by the vanishing
cocycles $\{\delta_1,...,\delta_M\}$ (cf. \cite[Lec. 6, p. 71]{L}).  For fixed $t\in U$, we put
\[
J^m_v(X_t) = \Ext^1_{\rm MHS}\big(\Z(0),H^{2m-1}_v(X_t,\Z(m))\big).
\]

For each $t_i\in\Sigma$,
we recall the Picard-Lefschetz transformation
$T_i$, and formula $T_i(\gamma) = \gamma + (-1)^m(\gamma,\delta_i)\delta_i$,
where $(\delta_i,\delta_j) :=  (\delta_i,\delta_j)_{X_t} \in \Z$ is the cup product on $X_t$ 
(followed by the trace). 
Note that a lattice in $H^{2m-1}_v(X_t,\Z)$ (i.e. defining
a basis of $H^{2m-1}_v(X_t,\Q)$),
is given (up to relabelling) by a suitable subset $\{\delta_1,...,\delta_{2g}\}$, ($2g\leq M$),
of vanishing cocycles.  However we are going to choose our lattice 
generators  $\{\delta_1,...,\delta_{2g}\}$
more carefully as follows:

\medskip
$\bullet$ \ Given $\delta_1$, choose $\delta_2$ such that $(\delta_1,\delta_2) \ne 0$.
Since $(\delta_j^2) = 0\ \forall\ j=1,...,M$, it follows that $\{\delta_1, \delta_2\}$
are $\Q$-independent.

\medskip
$\bullet$ \ Next, we argue inductively on $k$ with $1\leq k\leq 2g-1$, that
\begin{equation}\label{778}
\{\delta_1,...,\delta_k,\delta_{k+1}\}\ {\rm are}\  \Q\text{\rm-independent\ and} 
\end{equation}
\[
 (\delta_{\ell},\delta_{k+1}) \ne 0\
{\rm for\ some}\ \ell \in \{1,...,k\}.
\]
Indeed if (\ref{778}) failed to hold for any or all such $k$, then in light
of the Picard-Lefschetz formula, $\{\delta_1,...,\delta_M\}$ 
would not be conjugate under the monodromy group action \cite[Lec. 6, p. 71]{L}.

\section{Main results}

The class $\delta(\nu)\in H^1(\PP^1,j_*R^{2m-1}\rho_{U,\ast}\Z)^{(m,m)}$ is called
the topological invariant or the cohomology class of the normal function $\nu$. 

\begin{thm}\label{T303} Let $\nu\in H^0(\PP^1,\ol{\J})$ be 
a normal function with nontrivial cohomology class, i.e., satisfying $\delta(\nu)\ne 0$.
Then for very general $t\in U$, the subgroup $\langle \nu(t)\rangle \subset J_v^m(X_t)$
generated by $\nu(t)$, is dense in the strong topology. In particular, the family
of rational curves in the manifold (see \cite{Z1}, Prop. 2.9):
\[
{\bf J} := \coprod_{t\in \PP^1}J_v^m(X_t),
\]
(viz., the images of non-constant holomorphic maps $\PP^1\to {\bf J}$),  is dense in the strong topology.
\end{thm}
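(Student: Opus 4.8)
The plan is to reduce the density statement about $\langle\nu(t)\rangle$ in $J^m_v(X_t)$ to an application of Kronecker's theorem (Theorem \ref{THM200}) applied fiberwise, where the nontriviality of the topological invariant $\delta(\nu)$ supplies the arithmetic independence input needed to invoke it. The key point is that $\delta(\nu)\neq 0$ is a global cohomological condition, but density of $\langle\nu(t)\rangle$ is a pointwise condition; the bridge between them will be a monodromy argument around the points of $\Sigma$. Specifically, each $J^m_v(X_t)$ is a compact real torus (its underlying group, forgetting the complex structure, is $H^{2m-1}_v(X_t,\R)/H^{2m-1}_v(X_t,\Z)$), so Theorem \ref{THM200} says that $\langle\nu(t)\rangle$ is dense exactly when the real coordinates of $\nu(t)$, together with $1$, are $\Q$-linearly independent. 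I would first establish that the set of $t\in U$ for which this fails is contained in a countable union of proper analytic subvarieties of $U$, so that "very general $t$" works; the content is that each nontrivial $\Q$-linear relation among the coordinates of $\nu(t)$ cuts out such a subvariety (since $\nu$ is holomorphic and the real-analytic coordinate functions vary holomorphically in a suitable sense), and then the conclusion follows provided at least one such relation fails identically.

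The heart of the argument, therefore, is to show that no nontrivial $\Q$-linear relation among $1$ and the coordinates of $\nu(t)$ holds identically in $t$. Suppose such a relation held for all $t\in U$. Using the chosen lattice generators $\{\delta_1,\dots,\delta_{2g}\}$ and the Picard–Lefschetz formula $T_i(\gamma)=\gamma+(-1)^m(\gamma,\delta_i)\delta_i$, I would track how such an identical relation transforms under the monodromy around each $t_i\in\Sigma$; because the monodromy acts on $H^{2m-1}_v(X_t,\Z)$ through the $T_i$, an identical $\Q$-linear relation among the periods of $\nu$ would be monodromy-invariant, forcing it (via the carefully arranged non-degeneracy conditions \eqref{778}, which guarantee the vanishing cocycles are "linked" under the intersection form and conjugate under monodromy) to descend to a relation that constrains $\delta(\nu)$. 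The precise mechanism: an identical rational relation on the multivalued period functions of $\nu$ on $U$ would imply that $\nu$ lifts, modulo that relation, to a single-valued section of $j_*R^{2m-1}\rho_{U,*}\Q$-type data, which via the exact sequence \eqref{D28} would kill $\delta(\nu)$ — contradicting $\delta(\nu)\neq 0$. This is the step I expect to be the main obstacle, because it requires care in separating the contribution of the fixed part $J^m(X)$ (which genuinely can have non-dense cyclic subgroups, as the examples of \cite{G} show) from the variable vanishing part $J^m_v(X_t)$: the hypothesis $\delta(\nu)\ne 0$ must be used precisely to see that the projection of $\nu(t)$ to the vanishing factor is "generic", and this is where the decomposition \eqref{ER} and the Lefschetz-theoretic structure of $H^1(\PP^1,j_*R^{2m-1}\rho_{U,*}\Z)^{(m,m)}$ enter.

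For the final assertion about the manifold ${\bf J}=\coprod_{t\in\PP^1}J^m_v(X_t)$, I would argue as follows. Given a normal function $\nu$ with $\delta(\nu)\ne 0$ — which exists because the Hodge-theoretic group $H^1(\PP^1,j_*R^{2m-1}\rho_{U,*}\Z)^{(m,m)}$ is nonzero for a suitable choice of $X$ and pencil, or rather we simply fix such a $\nu$ as hypothesized — the graph $t\mapsto\nu(t)$ over $\PP^1$ is, after the construction recalled from \cite[Prop. 2.9]{Z1}, a union of rational curves in ${\bf J}$; applying the first part of the theorem, the points $\nu(t)$ are dense in the fiber $J^m_v(X_t)$ for very general $t$, hence $\bigcup_t\langle\nu(t)\rangle$ — which lies in the union of the rational curves swept out by the multiples $k\nu$ — is dense in ${\bf J}$ for the strong topology. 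Since each $k\nu$ is again a normal function (the group structure on normal functions is given by \eqref{D28}) whose graph is a rational curve, the union of all rational curves in ${\bf J}$ contains this dense set and is therefore itself dense. The only subtlety here is ensuring the countable exceptional set of "bad" $t$ from the first part does not obstruct density of the total space, which is immediate because density in ${\bf J}$ only requires density of the union over a set of $t$ that is dense in $\PP^1$, and the complement of a countable union of proper analytic subsets of $U$ is such a set.
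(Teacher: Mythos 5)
Your proposal follows essentially the same route as the paper: Kronecker's theorem applied fiberwise to the real torus $J^m_v(X_t)$, the ``very general'' locus handled as the complement of a countable union of proper (real-analytic) subsets of $U$, and the crucial non-existence of an identical $\Q$-linear relation among $1,x_1(t),\dots,x_{2g}(t)$ established exactly as you indicate --- by propagating a single such relation through the Picard--Lefschetz logarithms $N_i$ using the linking conditions (\ref{778}) until $N_i(x_i(t))=0$ for all $i$, whence the $x_i(t)$ are rational constants and $\delta(\nu)=0$ rationally, contradicting the hypothesis. The paper simply carries out this propagation explicitly via the period computations (\ref{E66})--(\ref{ES3}); your outline identifies all the same ingredients, including the treatment of the fixed part $J^m(X)$ and the density of the rational curves $k\nu$ in ${\bf J}$.
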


\begin{proof} From the Picard-Lefschetz formula,
\[
N_i = \log T_i =(T_i-I),\ {\rm using}\ (T_i-I)^2=0.
\]
Now let $\nu \in H^0(\PP^1,\ol{\J})$ and $\omega\in H^0(\PP^1,\ol{\F}^m)$ 
be given. Note that 
\[
\nu : \PP^1 \to {\bf J},
\]
defines a rational curve on ${\bf J}$. Next, the images 
\[
\{[\delta_1],...,[\delta_{2g}]\} \ {\rm in}\ F^{m,\ast}H_v^{2m-1}(X_t,\C) := H_v^{2m-1}(X_t,\C)
/ F^mH_v^{2m-1}(X_t,\C),
\]
define a lattice. In terms of this lattice and modulo the fixed part $J^m(X)$,
a local lifting of $\nu$ is given by
$\sum_{j=1}^{2g}x_j(t)[\delta_j]$, for suitable real-valued functions $\{x_j(t)\}$, multivalued
on $U$. Let $T_i\nu(\omega(t))$ be the result of analytic
continuation of $\nu(\omega(t))$ counterclockwise in $\PP^1$ about $t_i$ and 
$N_i\nu(\omega(t)) = T_i\nu(\omega(t)) - \nu(\omega(t))$. 
About $t_i$, we pick up a period
\[
N_i\nu(\omega(t)) = c_i\int_{\delta_i}\omega(t),\ {\rm for\ some}\ c_i\in \Z,
\]
dependent only on $\nu$ (not on $\omega$), where we identify $\delta_i$ with its
corresponding homology vanishing cycle via Poincar\'e duality. 
Likewise in terms of the lattice description,
\[
N_i\nu(\omega(t)) = \sum_{j=1}^{2g}T_i(x_j(t))\int_{\delta_j+(-1)^m(\delta_j,\delta_i)\delta_i}
\omega(t) \ - \ \sum_{j=1}^{2g} x_j(t)\int_{\delta_j}\omega(t)
\]
\[
= \sum_{j=1}^{2g}N_i(x_j(t))\int_{\delta_j}\omega(t) \ +\  (-1)^m\biggl(
\sum_{j=1}^{2g}T_i(x_j(t))(\delta_j,\delta_i)\biggr)\cdot\int_{\delta_i}\omega(t).
\]
Thus
\begin{equation}\label{E66}
c_i = N_i(x_i(t)) +  (-1)^m\sum_{j=1}^{2g}T_i(x_j(t))(\delta_j,\delta_i),
\end{equation}
and 
\begin{equation}\label{E13}
N_i(x_j(t)) = 0\  {\rm  for \ all} \ i\ne j.
\end{equation} Hence $T_i(x_j(t)) = x_j(t)$ for all
$i\ne j$ and further, using $(\delta_i,\delta_i) = 0$, we can rewrite
equation (\ref{E66}) as:
\begin{equation}\label{E67}
c_i = N_i(x_i(t)) + (-1)^m\sum_{j=1}^{2g}x_j(t)(\delta_j,\delta_i).
\end{equation}
Note that if $N_i(x_i(t))  = 0$ for all $i$, then from the linear
system in (\ref{E67}), $x_i(t)\in \Q$ for all $i$,
and so $\delta (\nu) = 0\in H^1(\PP^1,j_{\ast}R^{2m-1}\rho_{U,\ast}\Q)$.
Now suppose that we have a nontrivial relation:
\begin{equation}\label{E14}
\sum_{j=1}^{2g}\lambda_jx_j(t) = \lambda_0,\ {\rm for\ some}\ \lambda_i\in \Q, \ \forall i, \
t\in U.
\end{equation}
Then by (\ref{E13}) and (\ref{E14}) we have
\[
\lambda_iN_i(x_i(t)) = \sum_{j=1}^{2g}\lambda_jN_i(x_j(t)) = 0.
\]
So $\lambda_i \ne 0 \Rightarrow N_i(x_i(t)) = 0$.  
Let us assume for the moment that $\lambda_1 \ne 0$. Then $N_1(x_1(t))=0$, hence from (\ref{E67}):
\begin{equation}\label{ES1}
(-1)^mc_1 = (\delta_2,\delta_1)x_2(t) + (\delta_3,\delta_1)x_3(t) +\cdots + (\delta_{2g},\delta_1)x_{2g}(t),
\end{equation}
and applying $N_2$ and (\ref{778}) we arrive at
\[
0 = N_2(c_1) =  (\delta_2,\delta_1)N_2(x_2(t)) 
 \Rightarrow N_2(x_2(t)) = 0.
\]
Hence again from (\ref{E67}):
\begin{equation}\label{ES2}
(-1)^mc_2 = (\delta_1,\delta_2)x_1(t) + (\delta_3,\delta_2)x_3(t)+\cdots+(\delta_{2g},\delta_2)
x_{2g}(t).
\end{equation}
Applying $N_3$ to both equations (\ref{ES1}) and (\ref{ES2}), and (\ref{778}) we arrive at
\[
(0,0) = \big(N_3(c_1),N_3(c_2)\big) = \big((\delta_3,\delta_1),(\delta_3,\delta_2)\big)\cdot
N_3(x_3(t)) \Rightarrow N_3(x_3(t)) = 0,
\]
and so on.  Now it may happen that $\lambda_1=0$.
Since $(\lambda_1,...,\lambda_{2g})
\ne (0,...,0)$ we can assume that $\lambda_{\ell_1} \ne 0$ for some $1\leq \ell_1 \leq 2g$.
Thus by (\ref{E14}), $N_{\ell_1}(x_{\ell_1}(t)) = 0$ and accordingly by (\ref{E67}):
\begin{equation}\label{ES3}
(-1)^mc_{\ell_1} = \sum_{j=1}^{2g}(\delta_j,\delta_{\ell_1})x_j(t).
\end{equation}
By (\ref{778}), $(\delta_{\ell_2},\delta_{\ell_1}) \ne 0$ for some $1\leq \ell_2<\ell_1$ (assuming
$\ell_1 > 1$). Applying $N_{\ell_2}$ to (\ref{ES3}), we arrive at $N_{\ell_2}(x_{\ell_2}(t)) = 0$,
and hence again by (\ref{E67}):
\[
(-1)^mc_{\ell_2} = \sum_{j=1}^{2g}(\delta_j,\delta_{\ell_2})x_j(t).
\]
Again by (\ref{778}), $(\delta_{\ell_3},\delta_{\ell_2}) \ne 0$ for some $1\leq \ell_3<\ell_2$ (assuming
$\ell_2 > 1$), and thus we can repeat this process until we get $N_1(x_1(t)) = 0$. This puts in
the situation of equation (\ref{ES1}), where the same arguments imply that $N_i(x_i(t)) = 0$ for
all $i=1,...,2g$.
\end{proof}

\begin{cor}\label{C33}  Let $V$ be a general
quintic threefold. Then the image of the Abel-Jacobi map
$AJ : \CH^2_{\hom}(V) \to J^2(V)$ is a countable dense subset of 
$J^2(V)$.
\end{cor}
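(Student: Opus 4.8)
\emph{The plan.} The strategy is to realize $V$ inside a Lefschetz pencil of quintic threefolds, build on that pencil a normal function $\nu$ coming from algebraic cycles with non-zero cohomology class, and feed this into \thmref{T303}; countability of the image of $AJ$ is a separate, essentially formal, point. (Here ``general'' is read as ``very general'', which is what the argument yields.)

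\emph{Set-up.} Take $X=\PP^4$ with the polarization $\CO_{\PP^4}(5)$, so that, in the notation of Section~2 with $m=2$, the members $X_t$ of a Lefschetz pencil are quintic threefolds, the base locus $D=\bigcap_t X_t$ is a smooth complete intersection of two quintics --- a surface --- and $\ol X=B_D(\PP^4)$ is as in (\ref{D27}). Fix a line $\ell\subset\PP^4$ and choose the pencil so that every member contains $\ell$, i.e.\ $\ell\subset D$; a general such pencil is Lefschetz with $D$ smooth. Since $H^3(\PP^4)=0$, the orthogonal decomposition recalled in Section~2 gives $H^3_v(X_t,\Z)=H^3(X_t,\Z)$, hence $J^2_v(X_t)=J^2(X_t)$, so the density produced by \thmref{T303} will occur inside the full intermediate jacobian.

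\emph{The normal function.} On the surface $D$ put $Z:=25\ell-(D\cap H_0)$ for a general hyperplane $H_0$. Since $D\subset X_t$ for every $t$, this is one fixed $1$-cycle lying on every fibre, and a Gysin computation shows that both $25[\ell]$ and $D\cap H_0$ have class $5h_t^2$ in $H^4(X_t,\Z)$ (with $h_t$ the hyperplane class), so $[Z]=0$ and $Z\in\CH^2_{\hom}(X_t)$ for every $t$; hence $\nu(t):=AJ_{X_t}(Z)$ defines a normal function $\nu\in H^0(\PP^1,\ol\J)$. The crucial point is $\delta(\nu)\ne 0$. I would obtain this from the standard description of the topological invariant of the normal function attached to a relative cycle supported on the blow-up centre $D$: using (\ref{ER}) and $\mathrm{Prim}^4(\PP^4)=0$, it identifies $\delta(\nu)$ with a non-zero multiple of the class $[Z]_D=25[\ell]_D-h_D$, which lies in $H^2_v(D,\Z)=(h_D)^{\perp}$ because $(25[\ell]_D-h_D)\cdot h_D=25-25=0$. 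This class is non-zero: if $[\ell]_D$ were a rational multiple of $h_D$ in $H^2(D,\Q)$, then from $[\ell]_D\cdot h_D=\deg\ell=1$ and $h_D^2=\deg D=25$ we would get $[\ell]_D=\tfrac1{25}h_D$ and hence $[\ell]_D^{\,2}=\tfrac1{25}\notin\Z$, contradicting integrality of the self-intersection of $\ell$ on the smooth surface $D$. (In substance this non-vanishing is Clemens' theorem \cite{Cl}; see also \cite{Gr}.) Granting $\delta(\nu)\ne 0$, \thmref{T303} gives that for very general $t\in U$ the cyclic group $\langle\nu(t)\rangle=\langle AJ_{X_t}(Z)\rangle$ is dense in $J^2_v(X_t)=J^2(X_t)$. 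As the line $\ell$ and the pencil vary, the members of these pencils sweep out a dense family of quintic threefolds, so a routine spreading-and-countability argument yields: for a very general quintic threefold $V$, the image of $AJ:\CH^2_{\hom}(V)\to J^2(V)$ contains a dense cyclic subgroup, hence is dense.

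\emph{Countability, and the main obstacle.} For general $V$ the Hodge structure $H^3(V,\Q)=H^3_{\mathrm{prim}}(V,\Q)$ is simple (the monodromy of the universal family of quintic threefolds is Zariski-dense in the symplectic group), so by Griffiths $J^2(V)$ contains no non-trivial abelian subvariety; hence $AJ$ kills cycles algebraically equivalent to zero and factors through the Griffiths group $\mathrm{Griff}^2(V)=\CH^2_{\hom}(V)/\CH^2_{\mathrm{alg}}(V)$. Every $1$-cycle is a difference of effective ones, effective $1$-cycles lying in a common irreducible component of $\coprod_d\mathrm{Chow}_{1,d}(V)$ are algebraically equivalent, and this Chow variety has only countably many components; therefore $\mathrm{Griff}^2(V)$ --- and with it $\IM(AJ)$ --- is countable, which together with the previous paragraph exhibits $\IM(AJ)$ as a countable dense subset of $J^2(V)$. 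The single non-formal ingredient is $\delta(\nu)\ne 0$: it depends on choosing the pencil so that $D$ carries the extra algebraic class $[\ell]_D$, and on the precise formula computing the topological invariant of the associated normal function in terms of that class --- this is where the depth of \cite{Cl} enters --- after which \thmref{T303} does the rest.
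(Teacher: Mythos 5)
Your proposal is correct in outline and reaches the statement by the same engine (\thmref{T303} applied to a normal function with $\delta(\nu)\ne 0$, then a spreading argument plus a separate countability argument), but via a genuinely different construction of that normal function. The paper takes $X\subset\PP^5$ to be the Fermat quintic fourfold and a Lefschetz pencil of its hyperplane sections; the nonvanishing of $\delta(\nu)$ is extracted from $H^1(\PP^1,R^3\rho_{U,\ast}\Q)^{(2,2)}\ne 0$ via the surjectivity in (\ref{D28}), with the fact that $\nu(t)$ actually lies in the Abel--Jacobi image supplied implicitly by Griffiths' example \cite{Gr} (algebraic $(2,2)$-classes, e.g.\ planes, on the Fermat); the passage to the general quintic is by deforming the lines in $X_t$ through the universal family. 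You instead take $X=\PP^4$ polarized by $\CO_{\PP^4}(5)$, a pencil of quintics through a fixed line $\ell$, and the explicit cycle $Z=25\ell-D\cap H_0$ supported on the base surface $D$; then $\delta(\nu)$ is identified with the $H^2_v(D)$-component $[Z]_D$ under (\ref{ER}), nonzero by your integrality argument ($\ell^2\in\Z$ on $D$). What your route buys: algebraicity of $\nu$ is built in (so density of $\langle\nu(t)\rangle$ immediately bounds the Abel--Jacobi image from below), the Fermat/Griffiths input is replaced by an elementary computation on $D$, and you make the countability step explicit, which the paper does not. What the paper's route buys: it uses only a generic Lefschetz pencil, so none of the special-pencil issues below arise, and it stays closer to the classical Griffiths--Clemens circle of ideas \cite{Gr}, \cite{Cl}.

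Two points in your write-up need to be nailed down. First, your pencil is general only inside the subsystem of quintics containing $\ell$, not in $|\CO_{\PP^4}(5)|$; besides smoothness of $D$ and nodality of the singular members (both checkable by Bertini-type arguments), you must verify that the vanishing cocycles of this pencil are conjugate under the monodromy group, since that is precisely what justifies (\ref{778}), on which the proof of \thmref{T303} rests. This is not automatic from the generic-pencil theory; it does hold here because the discriminant inside the subsystem of quintics through $\ell$ is irreducible (the generic nodal member has its node at a general point of $\PP^4$), but it must be said. Second, the ``standard description'' you invoke for $\delta(\nu)$ should be pinned to a precise statement: the cohomology class of the normal function of the constant cycle $Z$ equals the Leray image of the class of $\ol{Z}=Z\times\PP^1\subset \ol{X}=B_D(\PP^4)$ (this compatibility is in \cite{Z1}), and under the blow-up decomposition $H^4(\ol{X})=H^4(\PP^4)\oplus H^2(D)$ that class is exactly $(0,[Z]_D)$, which (\ref{ER}) then carries to $[Z]_D\in H^2_v(D)$. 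With these two items supplied, your argument is complete and at least as detailed as the paper's own proof, which is terse at the analogous places (the Fermat computation, the algebraicity of $\nu$, and the final spreading and countability steps).
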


\begin{proof} Let $X\subset \PP^5$ be the Fermat quintic fourfold,
and $\{X_t\}_{t\in \PP^1}$ a Lefschetz pencil of hyperplane sections
of $X$. We will assume the notation given
in diagram (\ref{D27}). For the Fermat quintic, it is easy to check that
$H^1(\PP^1,R^3\rho_{U,\ast}\Q)^{(2,2)} \ne 0$, so by the
sequence in (\ref{D28}), there exists $\nu\in H^0(\PP^1,\ol{\J})$ such
that $\delta(\nu) \ne 0 \in H^1(\PP^1,R^3\rho_{U,\ast}\Q)$ (this being
related to Griffiths' famous example \cite{Gr}). Thus by
Theorem \ref{T303} and for general $t\in \PP^1$,
the Abel-Jacobi image is dense in $J^2(X_t)$. But it is well known that the lines
in $X_t$ for general $t\in \PP^1$, deform in the universal family of
quintic threefolds in $\PP^4$. The corollary follows from this.
\end{proof}

\begin{rem}\label{RG} {\rm In light of the conjectures in \cite{G-H}, Corollary \ref{C33}
most likely does not generalize to higher degree general hypersurface
threefolds. However there is a different kind of generalization that probably
holds. Namely, let $S$ be the universal family of smooth threefolds
$\{V_t\}_{t\in S}$ of degree $d$ say in $\PP^4$. Put
\[
{\bf J}_S := \coprod_{t\in S}J^2(V_t),
\]
and
\[
{\bf J}^2_{S,{\rm inv}} := {\rm Image}\biggl(\coprod_{t\in S}\CH^2_{\hom}(V_t) 
\xrightarrow{{\rm Abel-Jacobi}}
{\bf J}_S\biggr).
\]
Then in the strong topology, we anticipate that ${\bf J}^2_{S,{\rm inv}}
\subset {\bf J}_S$ is a dense subset.}
\end{rem}

In this direction, we have the following general result.
\begin{cor}\label{C666} Let $\coprod_{{\lambda\in S_0}}W_{\lambda} \to S_0$ be a
smooth proper  family  of $2m$-dimensional projective varieties in 
some $\PP^N$ with the following property:

\medskip
\noindent
There exists a dense subset $\Sigma\subset S_0$ such that $\lambda\in \Sigma
\Rightarrow {\rm Prim}_{\rm alg}^{m,m}(W_{\lambda},\Q) \ne 0$,
where Prim is primitive cohomology with respect to the embedding $W_{\lambda}
\subset \PP^N$. Further, let us assume that $H^{2m-1}(W_{\lambda},\Q)^{\pi_1(S_0)} = 
H^{2m-1}(W_{\lambda},\Q)$ and let
\[
T := \big\{t:= (c,\lambda)\in \PP^{N,\ast}\times S_0\ \big|\ V_t := \PP^{N-1}_c\cap W_{\lambda}\
smooth,\ \&\dim V_t=2m-1\big\},
\]
with corresponding ${\bf J}^m_{T,{\rm inv}} \subset {\bf J}^m_T$ (where this jacobian
space only involves the orthogonal complement of the fixed part of a corresponding
variation of Hodge structure).  
Then in the strong topology
${\bf J}^m_{T,{\rm inv}}$ is dense in ${\bf J}^m_T$. 
\end{cor}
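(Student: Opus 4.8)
The plan is to reduce Corollary \ref{C666} to \thmref{T303} applied fiberwise over the family. First I would fix $\lambda \in \Sigma$ so that ${\rm Prim}^{m,m}_{\rm alg}(W_\lambda,\Q)\ne 0$, pick an algebraic cycle $Z_\lambda$ on $W_\lambda$ whose class is a nonzero primitive Hodge class, and choose a Lefschetz pencil $\{V_{t}\}_{t\in\PP^1_\lambda}$ of hyperplane sections of $W_\lambda$ with base locus $D_\lambda$. The hypothesis $H^{2m-1}(W_\lambda,\Q)^{\pi_1(S_0)}=H^{2m-1}(W_\lambda,\Q)$ guarantees the fixed part of the relevant variation of Hodge structure is the full $H^{2m-1}(W_\lambda)$, so the ``vanishing'' jacobians $J^m_v(V_t)$ appearing in \thmref{T303} are precisely the fibers of ${\bf J}^m_{T}$ restricted to this pencil. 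The cycle $Z_\lambda$ together with the machinery of \cite{Z1} produces a normal function $\nu_\lambda \in H^0(\PP^1_\lambda,\ol{\J})$ with $\delta(\nu_\lambda)\ne 0$: indeed, by the short exact sequence (\ref{D28}) and the Hodge-structure isomorphism (\ref{ER}), the image of $Z_\lambda$ under the primitive cohomology projection gives a nonzero class in $H^1(\PP^1_\lambda, j_*R^{2m-1}\rho_{U,*}\Z)^{(m,m)}$, and any preimage under $\delta$ is such a $\nu_\lambda$. One has to check that $\nu_\lambda$ can be taken to be the normal function genuinely associated to the algebraic cycle $Z_\lambda$ (so its fiberwise values lie in ${\bf J}^m_{T,{\rm inv}}$, not merely in ${\bf J}^m_T$); this follows because $Z_\lambda$, being nullhomologous on each $V_t$ after subtracting a fixed multiple of a complete intersection cycle, restricts to a family of nullhomologous cycles and its Abel-Jacobi images glue to a normal function whose cohomology class is exactly the primitive part of $[Z_\lambda]$.

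Next I would apply \thmref{T303} to $\nu_\lambda$: for very general $t\in U_\lambda \subset \PP^1_\lambda$ the subgroup $\langle \nu_\lambda(t)\rangle$ is dense in $J^m_v(V_t)$, and since $\nu_\lambda(t)$ lies in the Abel-Jacobi image of $\CH^{m}_{\hom}(V_t)$, so does every multiple $k\nu_\lambda(t)$. Hence for very general $t$ over $\lambda$, the fiber of ${\bf J}^m_{T,{\rm inv}}$ over $t$ is already dense in the fiber of ${\bf J}^m_T$. To pass from ``very general $t$ over each $\lambda\in\Sigma$'' to ``dense in the total space ${\bf J}^m_T$'', I would argue as follows: $\Sigma$ is dense in $S_0$, so the points $(c,\lambda)$ with $\lambda\in\Sigma$ and $c$ very general in $\PP^{N,*}$ form a dense subset $T'\subset T$; over each such point the fiberwise image is dense; and density of a set that is fiberwise dense over a dense set of base points, in a topological fiber bundle with connected fibers, gives density in the total space. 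A small point to attend to here is that ``very general'' means outside a countable union of proper analytic subsets, which is still dense, so $T'$ is genuinely dense in $T$.

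The last ingredient is to make sure the fiberwise jacobians match up globally: one needs that restricting the Lefschetz-pencil construction to a neighborhood in $S_0$ is compatible, i.e. that the relative intermediate jacobian ${\bf J}^m_T$ over $T$ restricts, over the locus $\{c\}\times(\text{pencil through }\lambda)$, to the family $\coprod_t J^m_v(V_t)$ of \thmref{T303}; this is exactly where the hypothesis $H^{2m-1}(W_\lambda,\Q)^{\pi_1(S_0)}=H^{2m-1}(W_\lambda,\Q)$ is used — it ensures the fixed part being quotiented out in the definition of ${\bf J}^m_T$ is the constant sheaf $H^{2m-1}(W_\lambda)$, which is precisely the non-vanishing summand in the Lefschetz orthogonal decomposition on each $V_t$.

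I expect the main obstacle to be the second paragraph's globalization step: \thmref{T303} only gives density of $\langle\nu_\lambda(t)\rangle$ for \emph{very general} $t$ in each individual pencil, and one must organize these exceptional loci — one countable union per pencil, and pencils varying over the dense set $\Sigma\subset S_0$ — so that the good locus in $T$ is still dense. The cleanest way is to observe that for each fixed $\lambda\in\Sigma$ the set of good $c$ is a dense (co-meager) subset of $\PP^{N,*}$, that $\Sigma$ is dense in $S_0$, and that a subset of $T$ meeting each fiber over a dense subset of base points in a dense subset is itself dense (no Baire-category delicacy is needed for mere density, only for the ``very general'' refinement inside a single pencil, which is already supplied by \thmref{T303}). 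The other, more bookkeeping-heavy obstacle is verifying that the normal function produced really has image landing in ${\bf J}^m_{T,{\rm inv}}$ — i.e. that it is the normal function of an honest family of algebraic cycles rather than an abstract section of $\ol{\J}$ — but this is handled exactly as in the proof of \coref{C33}, by exhibiting the cycle on the ambient $W_\lambda$ and restricting.
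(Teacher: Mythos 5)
The paper's own ``proof'' of Corollary \ref{C666} is a one-line deferral to the techniques of the section, and your proposal is a correct execution of exactly the intended argument: manufacture a normal function with nontrivial cohomology class from the nonzero primitive algebraic class as in Corollary \ref{C33}, apply Theorem \ref{T303} fiberwise along Lefschetz pencils, and globalize using the density of $\Sigma$ and the $\pi_1(S_0)$-invariance hypothesis to identify the quotiented fixed part with $H^{2m-1}(W_\lambda)$. No changes needed.
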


\begin{proof} This easily follows from the techniques of this section and
is left to the reader.
\end{proof}

\begin{rem}
{\rm  (i) The following is obvious, but certainly merits mentioning:
{\em Let us assume given the setting and assumptions
in Corollary \ref{C666}, and further assume that for all
$\lambda\in \Sigma$ and general $c$ with $t = (c,\lambda)\in T$, the $m$-th
$\Q$-Griffiths group $\big\{\CH^m_{\hom}(V_t)\big/
\CH^m_{\rm alg}(V_t)\big\}\otimes\Q = 0$. Then 
${\bf J}^m_T$ is a family of Abelian
varieties.}  (The general Hodge conjecture would imply 
in this situation that
${\bf J}^m_{T,{\rm inv}} = {\bf J}^m_T$, but we don't  yet
know this.)

\medskip
(ii) Our results say nothing about the {\em arithmetic} nature of
the invertible points on the jacobians. 
Matt Kerr pointed out to us Proposition 124  in \cite[p. 92]{K-P}, which appears to
be related to our results, and
may have some potential  in this direction; albeit it is unclear how to move
forward with this.}
\end{rem}

\end{document}